\documentclass[11pt]{article}
\usepackage{amsmath,amssymb,amsthm}
\usepackage{mathtools}
\usepackage{mathdots}
\usepackage{hyperref,url}
\usepackage{float}
\usepackage{tikz}
\usepackage{subcaption}

\hypersetup{
	colorlinks=true,
  linkcolor=black,          
  citecolor=black,         
  filecolor=black,      
  urlcolor=black           
}

\newcommand{\seqnum}[1]{\href{https://oeis.org/#1}{\rm \underline{#1}}}

\DeclareMathOperator{\black}{black}
\DeclareMathOperator{\white}{white}

\allowdisplaybreaks

\title{Dyck paths on black-and-white lattices}
\author{ Sela Fried\\ Department of Computer Science, Israel Academic College\\ Ramat Gan, Israel\\ \texttt{friedsela@gmail.com} }

\date{}

\theoremstyle{plain}
\newtheorem{theorem}{Theorem}
\newtheorem{lemma}{Lemma}
\newtheorem{corollary}{Corollary}

\begin{document}
\maketitle

\begin{abstract}
A Dyck path of semilength $n$ is a lattice path from $(0,0)$ to $(n,n)$ consisting of $n$ right-steps $(1,0)$ and $n$ up-steps $(0,1)$ that never rises above the line $y=x$. These paths are enumerated by the Catalan numbers and play a central role in enumerative combinatorics. We color the cells of the integer grid in black and white according to two natural patterns, namely chessboard and column-alternating, and enumerate the Dyck paths having equal numbers of black and white cells beneath them.
\end{abstract}

\vspace{10px}

\noindent{\textbf{Keywords: }{Catalan number, Dyck path, integer grid, Narayana number}} 

\vspace{10px}

\section{Introduction}

Let $n$ be a positive integer. A Dyck path of semilength $n$ is a lattice path from $(0,0)$ to $(n,n)$ consisting of $n$ right-steps $(1,0)$ and $n$ up-steps $(0,1)$ that never rises above the line $y=x$ (see Figure \ref{f11}). Dyck paths are classical objects in enumerative combinatorics, since they are counted by the Catalan numbers 
\[C_n=\frac{1}{n+1}\binom{2n}{n}.\]

\begin{figure}[H]
\centering
\scalebox{0.4}{%
\begin{tikzpicture}
\tikzset{DyckPath/.style={black,line width=4pt,line cap=round}}
\newcommand{\GridThree}{%
  \draw[very thin] (0,0) grid (3,3);}
\begin{scope}[shift={(0.0,0.0)}]    
  \GridThree
  \draw[DyckPath] (0,0)--(1,0)--(2,0)--(3,0)--(3,1)--(3,2)--(3,3);
\end{scope}
\begin{scope}[shift={(4.2,0.0)}]    
  \GridThree
  \draw[DyckPath] (0,0)--(1,0)--(2,0)--(2,1)--(3,1)--(3,2)--(3,3);
\end{scope}
\begin{scope}[shift={(8.4,0.0)}]     
  \GridThree
  \draw[DyckPath] (0,0)--(1,0)--(2,0)--(2,1)--(2,2)--(3,2)--(3,3);
\end{scope}
\begin{scope}[shift={(12.6,0.0)}]    
  \GridThree
  \draw[DyckPath] (0,0)--(1,0)--(1,1)--(2,1)--(3,1)--(3,2)--(3,3);
\end{scope}
\begin{scope}[shift={(16.8,0.0)}]    
  \GridThree
  \draw[DyckPath] (0,0)--(1,0)--(1,1)--(2,1)--(2,2)--(3,2)--(3,3);
\end{scope}
\end{tikzpicture}}
\caption{The five Dyck paths of semilength \(3\).}\label{f11}
\end{figure}

The purpose of this work is to study Dyck paths through the lens of colored lattices. Specifically, we consider two natural ways of coloring the cells of the integer lattice (see Figure \ref{ff22}):

\begin{enumerate}
\item [ (a)] Chessboard coloring - the $(i,j)$th cell is black if $i+j$ is even and white otherwise.
\item [(b)] Column-alternating coloring - the $j$th column is black if $j$ is odd and white otherwise.
\end{enumerate}
\begin{figure}[H]
\centering
\scalebox{0.4}{%
\begin{tikzpicture}
\newcommand{\ChessFour}{
  \foreach \i in {0,...,3}{
    \foreach \j in {0,...,3}{\pgfmathtruncatemacro\p{mod(\i+\j,2)}
      \ifnum\p=0 \fill[black!20] (\i,\j) rectangle ++(1,1);\fi}}
  \draw[very thin] (0,0) grid (4,4);
}
\newcommand{\ColsFour}{
  \foreach \i in {0,...,3}{
    \ifnum\i=0 \foreach \j in {0,...,3}{\fill[black!20] (\i,\j) rectangle ++(1,1);} \fi
    \ifnum\i=2 \foreach \j in {0,...,3}{\fill[black!20] (\i,\j) rectangle ++(1,1);} \fi}
  \draw[very thin] (0,0) grid (4,4);
}
\begin{scope}[shift={(0,0)}]
  \ChessFour
  \node[font=\sffamily\Huge] at (2,-1.4) {(a) Chessboard coloring.};
\end{scope}
\begin{scope}[shift={(12.0,0)}] 
  \ColsFour
  \node[font=\sffamily\Huge] at (2,-1.4) {(b) Column-alternating coloring.};
\end{scope}
\end{tikzpicture}}
\caption{The two colorings studied in this work.}\label{ff22}
\end{figure}

Given a Dyck path $p$, let $\black(p)$ and $\white(p)$ denote the numbers of black and white cells that lie below $p$. We say that $p$ is black-white balanced (with respect to a given coloring, which should be clear from context) if $\black(p)=\white(p)$. Our goal is to determine, for each of the two colorings, how many Dyck paths of semilength $n$ are black-white balanced. For example, exactly $6$ of the $14$ Dyck paths of semilength $4$ are black-white balanced with respect to the chessboard coloring and only $3$ are black-white balanced with respect to the column-alternating coloring (see Figures \ref{fig2} and \ref{tt94}).

When dealing with objects enumerated by the Catalan numbers, refinements according to specific statistics give rise to the Narayana numbers. These are defined by \[
N(n,k)=\frac{1}{n}\binom{n}{k}\binom{n}{k-1},\quad 1\leq k\leq n,
\] and satisfy 
\[\sum_{k=1}^nN(n,k)=C_n.\] 

For example, among the multitude of objects enumerated by the Catalan numbers (e.g., \cite{Sta}), a fundamental one is the family of words, called Dyck words, containing $n$ pairs of parentheses, which are correctly matched. Refining and counting only Dyck words having exactly $k$ subwords $()$, we obtain the Narayana number $N(n,k)$.

Thus, the problem of enumerating black-white balanced Dyck paths is a special case of a combinatorial statistic. Among the few works related we are aware of \cite{Ch07} by Cheng et al., who studied the area of Catalan paths on checkerboard (Catalan paths are very similar to Dyck paths but not the same). More recently, Fried \cite{Fr25} studied the distribution of black and white cells in bargraphs of $k$-ary words and permutations. 

We prove that the black-white balanced condition in the chessboard coloring corresponds to a well-known statistic on Dyck paths, namely the number of up-steps occurring at odd positions. As a consequence, the number of black-white balanced Dyck paths of semilength $n$ is given by the Narayana number $N(n,\lceil n/2 \rceil)$.  

In the column-alternating coloring the black-white balanced condition forces the right-steps to appear in consecutive pairs (with one extra right-step at the beginning when $n$ is odd). This structural constraint allows us to transform the problem into the terminology of generalized Dyck paths, 
leading to closed formulas in terms of binomial coefficients.

Notation wise, for a condition $c$, we denote by $\mathbf{1}_c$ the indicator function returning $1$ if $c$ holds and $0$, otherwise. Furthermore, if $m$ is a positive integer, we denote by $[m]$ the set $\{1,2,\ldots,m\}$.

\section{Chessboard coloring}

\begin{theorem}\label{c12}
The number of black-white balanced Dyck paths of semilength $n$ is $N(n,\lceil n/2 \rceil)$.
In particular, the proportion of black-white balanced Dyck paths among all Dyck paths of semilength $n$
is asymptotically $2/\sqrt{\pi n}$.
\end{theorem}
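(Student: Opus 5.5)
The plan is to reduce the balance condition to a parity statistic on the steps of the path, and then use the known Narayana refinement of that statistic.

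\textbf{Step 1: column-by-column count.} Decompose the region below a Dyck path $p$ column by column. For $i\in[n]$, let $h_i$ be the number of up-steps preceding the $i$th right-step. Then column $i$ contains the $h_i$ cells $(i,1),\ldots,(i,h_i)$ below $p$. In the chessboard coloring these cells alternate in color, so a column contributes $0$ to $\black(p)-\white(p)$ when $h_i$ is even. When $h_i$ is odd it contributes $(-1)^{i+1}$, since the bottom cell $(i,1)$ is black exactly when $i$ is odd. Hence
\[
\black(p)-\white(p)=\sum_{i=1}^n(-1)^{i+1}\mathbf{1}_{h_i \text{ odd}} .
\]

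\textbf{Step 2: translate to positions.} The $i$th right-step occupies position $i+h_i$ in the step sequence. Split the indices $i$ into four classes by the parities of $i$ and $h_i$, with sizes $A$ ($i$ odd, $h_i$ odd), $B$ ($i$ even, $h_i$ odd), $C$ ($i$ odd, $h_i$ even) and $D$ ($i$ even, $h_i$ even). Then:
\begin{itemize}
\item $p$ is balanced if and only if $A=B$;
\item the number of right-steps at odd positions is $B+C$;
\item $A+C=\lceil n/2\rceil$, since this is the number of odd $i\in[n]$.
\end{itemize}
Therefore $p$ is balanced if and only if exactly $\lceil n/2\rceil$ right-steps occur at odd positions. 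In the usual orientation, right-steps are the "up" steps of the Dyck path, so this matches the statistic announced in the introduction.

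\textbf{Step 3: invoke the Narayana refinement.} I would use the classical fact that the number of Dyck paths of semilength $n$ with exactly $k$ up-steps at odd positions equals $N(n,k)$. I would cite this rather than reprove it. If a self-contained argument is needed, I would group the steps into consecutive pairs at positions $(2j-1,2j)$, giving pairs $UU$, $DD$, $UD$, $DU$. An up-step at an odd position corresponds to a pair $UU$ or $UD$. I would then build a generating-function or bijective argument (e.g.\ to peaks) showing the refined count is Narayana. This step is the main obstacle if it must be proved from scratch; the rest is bookkeeping. Applying the fact with $k=\lceil n/2\rceil$ gives the count $N(n,\lceil n/2\rceil)$.

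\textbf{Step 4: asymptotics.} We have
\[
\frac{N(n,k)}{C_n}=\frac{n+1}{n}\cdot\frac{\binom{n}{k}\binom{n}{k-1}}{\binom{2n}{n}} .
\]
With $k=\lceil n/2\rceil$, both $\binom nk$ and $\binom n{k-1}$ are central or near-central, so each is $\sim 2^n\sqrt{2/(\pi n)}$. Also $\binom{2n}{n}\sim 4^n/\sqrt{\pi n}$. The ratio is therefore
\[
\sim\frac{2}{\pi n}\cdot\sqrt{\pi n}=\frac{2}{\sqrt{\pi n}} .
\]
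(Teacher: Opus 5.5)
Your proof is correct and takes essentially the same route as the paper. Both use the column-by-column parity identity, place the $i$th right-step at position $i+h_i$, reduce to the Narayana-distributed statistic of up-steps at odd positions (cited, as the paper cites Sulanke), and finish with Stirling. Your four-class count $A,B,C,D$ is a slightly cleaner version of the paper's indicator manipulations, and treating right-steps directly as Dyck up-steps at odd $1$-indexed positions avoids the paper's detour through $0$-indexed positions; your explicit asymptotic computation also fills in what the paper leaves as ``standard''.
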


\begin{proof}
For $i\in[n]$ denote by $p_i$ the number of cells in the $i$th column below $p$.
Let $\black_i(p)$ and $\white_i(p)$ denote the numbers of black and white cells that lie in the $i$th column below $p$.
We claim that
\begin{equation}\label{ee1}
\black_i(p)-\white_i(p)=(-1)^{i-1}\mathbf{1}_{\textnormal{$p_i$ is odd}}.
\end{equation}
Indeed, if $p_i$ is even, then the numbers of black and white cells are equal and therefore their difference is $0$.
If $p_i$ is odd, then there is one more black cell than white cells if $i$ is odd and one more white cell than black cells if $i$ is even.
Summing \eqref{ee1} over $i\in[n]$ gives
\begin{equation}\label{e2}
\black(p)-\white(p)=\sum_{i=1}^{n}(-1)^{i-1}\mathbf{1}_{\textnormal{$p_i$ is odd}}.
\end{equation}

Now let $i\in[n]$ and let $t_i\in[2n]$ be the index of the $i$-th right-step.
Before the $i$-th right-step there are $i-1$ right-steps and $p_i$ up-steps.
Thus $t_i=(i-1)+p_i+1=i+p_i$, and therefore, by \eqref{e2},
\begin{align*}
\black(p)-\white(p)
&=\sum_{i=1}^{n}(-1)^{i-1}\mathbf{1}_{\textnormal{$p_i$ is odd}}\\
&=\sum_{i=1}^{n}(-1)^{i-1}\mathbf{1}_{\textnormal{$t_i-i$ is odd}}\\
&=\sum_{\substack{i\in[n]\\ \textnormal{$i$ is odd}}}\mathbf{1}_{\textnormal{$t_i$ is even}}
-\sum_{\substack{i\in[n]\\ \textnormal{$i$ is even}}}\mathbf{1}_{\textnormal{$t_i$ is odd}}\\
&=\sum_{\substack{i\in[n]\\ \textnormal{$i$ is odd}}}\mathbf{1}_{\textnormal{$t_i$ is even}}
-\sum_{\substack{i\in[n]\\ \textnormal{$i$ is even}}}\bigl(1-\mathbf{1}_{\textnormal{$t_i$ is even}}\bigr)\\
&=\sum_{i=1}^n\mathbf{1}_{\textnormal{$t_i$ is even}}-\sum_{\substack{i\in[n]\\ \textnormal{$i$ is even}}}1\\
&=|\{\textnormal{right-steps of $p$ at even positions}\}|-\lfloor n/2\rfloor.
\end{align*}

Following the standard convention in \cite{Sul05}, index the steps of $p$ by $0,1,\dots,2n-1$.
Let $k$ be the number of up-steps at odd indices.
Then the number of right-steps at even positions in the $1,2,\dots,2n$ indexing is exactly $n-k$,
so the displayed identity becomes
\[
\black(p)-\white(p)=(n-k)-\lfloor n/2\rfloor=\lceil n/2\rceil-k.
\]
Hence $p$ is black-white balanced if and only if $k=\lceil n/2\rceil$.

By \cite[Proposition~2]{Sul05}, the number of Dyck paths of semilength $n$ with $k$ up-steps at odd positions
is the Narayana number $N(n,k)$. Therefore the number of black-white balanced Dyck paths is
$N(n,\lceil n/2\rceil)$.

Finally, the stated asymptotic for $N(n,\lceil n/2\rceil)/C_n$ follows by a standard application of Stirling's approximation.
\end{proof}

\begin{corollary}
The number of black-white balanced Dyck paths of semilength $n$ is $N(n,\lceil n/2\rceil)$.
\end{corollary}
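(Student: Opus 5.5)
The corollary is a restatement of the first assertion of Theorem~\ref{c12}, so the plan is to read it off from that theorem. Since the point of a separate corollary is presumably to isolate the exact count from the asymptotic claim, I would also make the chain of reductions explicit, so the corollary can be checked on its own.

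\textbf{Column reduction.} For a Dyck path $p$ of semilength $n$, let $p_i$ be the height of column $i$. A column of even height contributes equally many black and white cells. A column of odd height contributes an excess of $(-1)^{i-1}$, because the bottom cell $(i,1)$ is black exactly when $i$ is odd. Summing over the columns gives $\black(p)-\white(p)=\sum_i(-1)^{i-1}\mathbf{1}_{p_i\text{ odd}}$.

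\textbf{Translation to step positions.} The $i$th right-step sits at position $t_i=i+p_i$. Substituting this turns the signed sum into (number of right-steps at even positions) minus $\lfloor n/2\rfloor$. Passing to the $0$-indexed convention of \cite{Sul05}, let $k$ be the number of up-steps at odd indices. The number of right-steps at even positions in the $1$-indexed convention equals $n-k$: the $1$-indexed even positions are the $0$-indexed odd positions, there are $n$ of them, and $k$ of them carry up-steps. Hence $\black(p)-\white(p)=\lceil n/2\rceil-k$, and $p$ is balanced iff $k=\lceil n/2\rceil$.

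\textbf{Conclusion.} By \cite[Proposition~2]{Sul05}, the number of Dyck paths with $k$ up-steps at odd indices is $N(n,k)$. Since $1\le\lceil n/2\rceil\le n$, the Narayana number $N(n,\lceil n/2\rceil)$ is well defined, and it is the required count.

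The step most likely to go wrong is the off-by-one bookkeeping between the $1$-indexed and $0$-indexed conventions. To guard against it I would check small cases directly. For $n=3$ the formula gives $N(3,2)=3$, and direct counting confirms that three of the five paths are balanced. For $n=4$ it gives $N(4,2)=6$, matching the example in the introduction.
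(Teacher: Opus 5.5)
Your proposal is correct and follows essentially the same route as the paper. The corollary is just the first assertion of Theorem~1, and your chain of steps mirrors that theorem's proof: the column-parity reduction, the substitution $t_i=i+p_i$, re-indexing to count up-steps at odd $0$-indexed positions so that $\black(p)-\white(p)=\lceil n/2\rceil-k$, and Sulanke's Proposition~2.
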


\begin{figure}[H]
\centering
\scalebox{0.3}{%
\begin{tikzpicture}
\tikzset{DyckPath/.style={black,line width=4pt,line cap=round}}
\newcommand{\ChessFour}{%
  \foreach \i in {0,...,3}{
    \foreach \j in {0,...,3}{\pgfmathtruncatemacro\p{mod(\i+\j,2)}
      \ifnum\p=0 \fill[black!20] (\i,\j) rectangle ++(1,1);\fi}}
  \draw[very thin] (0,0) grid (4,4);}
\newcommand{\Yes}{\node[font=\sffamily\Huge] at (2,-0.7) {yes};}
\newcommand{\No}{\node[font=\sffamily\Huge]  at (2,-0.7) {no};}

\begin{scope}[shift={(0.0,10.4)}]
  \ChessFour
  \draw[DyckPath] (0,0)--(1,0)--(2,0)--(3,0)--(4,0)--(4,1)--(4,2)--(4,3)--(4,4);
  \Yes
\end{scope}
\begin{scope}[shift={(5.2,10.4)}]
  \ChessFour
  \draw[DyckPath] (0,0)--(1,0)--(2,0)--(3,0)--(3,1)--(4,1)--(4,2)--(4,3)--(4,4);
  \No
\end{scope}
\begin{scope}[shift={(10.4,10.4)}]
  \ChessFour
  \draw[DyckPath] (0,0)--(1,0)--(2,0)--(3,0)--(3,1)--(3,2)--(4,2)--(4,3)--(4,4);
  \Yes
\end{scope}
\begin{scope}[shift={(15.6,10.4)}]
  \ChessFour
  \draw[DyckPath] (0,0)--(1,0)--(2,0)--(3,0)--(3,1)--(3,2)--(3,3)--(4,3)--(4,4);
  \No
\end{scope}
\begin{scope}[shift={(20.8,10.4)}]
  \ChessFour
  \draw[DyckPath] (0,0)--(1,0)--(2,0)--(2,1)--(3,1)--(4,1)--(4,2)--(4,3)--(4,4);
  \Yes
\end{scope}

\begin{scope}[shift={(0.0,5.2)}]
  \ChessFour
  \draw[DyckPath] (0,0)--(1,0)--(2,0)--(2,1)--(3,1)--(3,2)--(4,2)--(4,3)--(4,4);
  \No
\end{scope}
\begin{scope}[shift={(5.2,5.2)}]
  \ChessFour
  \draw[DyckPath] (0,0)--(1,0)--(2,0)--(2,1)--(2,2)--(3,2)--(4,2)--(4,3)--(4,4);
  \Yes
\end{scope}
\begin{scope}[shift={(10.4,5.2)}]
  \ChessFour
  \draw[DyckPath] (0,0)--(1,0)--(2,0)--(2,1)--(2,2)--(3,2)--(3,3)--(4,3)--(4,4);
  \No
\end{scope}
\begin{scope}[shift={(15.6,5.2)}]
  \ChessFour
  \draw[DyckPath] (0,0)--(1,0)--(2,0)--(2,1)--(3,1)--(3,2)--(3,3)--(4,3)--(4,4);
  \Yes
\end{scope}
\begin{scope}[shift={(20.8,5.2)}]
  \ChessFour
  \draw[DyckPath] (0,0)--(1,0)--(2,0)--(2,1)--(3,1)--(3,2)--(4,2)--(4,3)--(4,4);
  \No
\end{scope}

\begin{scope}[shift={(0.0,0.0)}]
  \ChessFour
  \draw[DyckPath] (0,0)--(1,0)--(1,1)--(2,1)--(2,2)--(3,2)--(4,2)--(4,3)--(4,4);
  \No
\end{scope}
\begin{scope}[shift={(5.2,0.0)}]
  \ChessFour
  \draw[DyckPath] (0,0)--(1,0)--(1,1)--(2,1)--(3,1)--(3,2)--(3,3)--(4,3)--(4,4);
  \No
\end{scope}
\begin{scope}[shift={(10.4,0.0)}]
  \ChessFour
  \draw[DyckPath] (0,0)--(1,0)--(1,1)--(2,1)--(3,1)--(3,2)--(4,2)--(4,3)--(4,4);
  \Yes
\end{scope}
\begin{scope}[shift={(15.6,0.0)}]
  \ChessFour
  \draw[DyckPath] (0,0)--(1,0)--(1,1)--(2,1)--(2,2)--(3,2)--(3,3)--(4,3)--(4,4);
  \No
\end{scope}

\end{tikzpicture}}
\caption{The $14$ Dyck paths of semilength $4$ with chessboard coloring. The label under each Dyck path indicates whether it is black-white balanced. Exactly $6=N(4,2)$ Dyck paths are labeled with `yes', in agreement with Theorem \ref{c12}.}
\label{fig2}
\end{figure}

\section{Column-alternating coloring}

\begin{lemma}\label{tt09}
Let $p$ be a Dyck path of semilength $n$ and for $i\in[n-1]$ let $u_i$ be the number of up-steps between the $i$th right-step and the $i+1$th right-step. Then $p$ is black-white balanced if and only if 
\begin{equation}
\begin{cases}
u_1=u_3=\cdots=u_{n-1}=0,& \textnormal{if $n$ is even},\\
u_2=u_4=\cdots=u_{n-1}=0,& \textnormal{if $n$ is odd}.
\end{cases}
\end{equation} 
\end{lemma}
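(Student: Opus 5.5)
The plan is to write $\black(p)-\white(p)$ as an alternating sum of column heights and then telescope it, column pair by column pair, into a signed sum of the $u_i$'s. As in the proof of Theorem~\ref{c12}, I will let $p_i$ be the number of cells below $p$ in the $i$th column, for $i\in[n]$. In the column-alternating coloring every cell of column $i$ has the same color: black if $i$ is odd, white if $i$ is even. Hence
\[
\black(p)-\white(p)=\sum_{i=1}^{n}(-1)^{i-1}p_i .
\]
Next I express the $p_i$ through the $u_i$. The height of column $i$ equals the number of up-steps preceding the $i$th right-step. A Dyck path in this convention starts with a right-step, so $p_1=0$, and in general
\[
p_i=u_1+\cdots+u_{i-1}, \qquad\text{equivalently}\qquad p_{i+1}-p_i=u_i\quad (i\in[n-1]).
\]

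The heart of the argument is grouping consecutive columns so that each group contributes a single $\pm u_i$. I split according to the parity of $n$.

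If $n$ is even, I pair columns $(2j-1,2j)$ for $j=1,\dots,n/2$. Each pair contributes $p_{2j-1}-p_{2j}=-u_{2j-1}$, so
\[
\black(p)-\white(p)=-(u_1+u_3+\cdots+u_{n-1}).
\]

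If $n$ is odd, I set aside column $1$, which contributes $p_1=0$. I then pair columns $(2j,2j+1)$ for $j=1,\dots,(n-1)/2$. Each pair contributes $-p_{2j}+p_{2j+1}=u_{2j}$, so
\[
\black(p)-\white(p)=u_2+u_4+\cdots+u_{n-1}.
\]

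In both cases the difference is, up to a global sign, a sum of nonnegative integers $u_i$. It therefore vanishes if and only if each term in the sum vanishes, which is exactly the condition in the statement.

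I do not expect a genuine obstacle here. The only points needing care are the indexing conventions. First, $p_1=0$ relies on the path beginning with a right-step; this holds because the path stays weakly below $y=x$. Second, the pairing must use up all columns: for even $n$ the last pair is $(n-1,n)$, and for odd $n$ it is $(n-1,n)$ after the singleton column $1$. Finally, the Dyck condition is not needed beyond guaranteeing that $p_1=0$ and that every $u_i\ge 0$.
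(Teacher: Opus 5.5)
Your proof is correct and is essentially the paper's argument: both start from $\black(p)-\white(p)=\sum_{i=1}^{n}(-1)^{i-1}p_i$ and use $p_{i+1}-p_i=u_i$ with $p_1=0$ to rewrite it as a single-signed sum of the $u_i$ from one parity class. The paper does this by interchanging the double sum, you by pairing adjacent columns, and your pairing also gets the global sign right (e.g.\ $-(u_1+u_3+\cdots+u_{n-1})$ for even $n$), whereas the paper's last equality drops the factor $(-1)^i$; that omission is harmless because the factor is constant on the surviving terms.
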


\begin{proof}
Let $p$ be a Dyck path of semilength $n$ and for $i\in[n]$ denote by $p_i$ the number of cells in the $i$th column below $p$. Clearly
\begin{equation}\label{e8}
\black(p)-\white(p)=\sum_{i=1}^n(-1)^{i-1}p_i.
\end{equation}
Now, $u_i=p_{i+1}-p_i$. Since $p_1=0$, telescoping gives  $p_j=\sum_{i=1}^{j-1}u_i$, for each $j\in[n]$.
Substituting this into \eqref{e8} gives
\begin{align*}
\black(p)-\white(p)&=\sum_{j=1}^n(-1)^{j-1}\sum_{i=1}^{j-1}u_i\\
&=\sum_{i=1}^{n-1}u_i\sum_{j=i+1}^{n}(-1)^{j-1}\\
&=\sum_{i=1}^{n-1}u_{i}\frac{(-1)^{i}-(-1)^{n}}{2}\\
&=\sum_{i=1}^{n-1}u_i\mathbf{1}_{\textnormal{$n-i$ is odd}}.
\end{align*} Thus, $p$ is black-white balanced if and only if \[\sum_{i=1}^{n-1}u_i\mathbf{1}_{\textnormal{$n-i$ is odd}}=0.\]
This condition is clearly equivalent to the statement of the lemma.
\end{proof}

\begin{theorem}\label{tt94}
The number of black-white balanced Dyck paths of semilength $n$ is given by
\[
\begin{cases}
\frac{1}{2m+1}\binom{3m}{m},&\textnormal{if $n=2m$},\\
\frac{1}{m+1}\binom{3m+1}{m},&\textnormal{if $n=2m+1$}.
\end{cases}
\] In particular, the proportion of black-white balanced Dyck paths among all Dyck paths of semilength $n$ is asymptotically $\beta\alpha^n$, where $\alpha = 3\sqrt{3}/8$ and $\beta=\sqrt{3/2}$, if $n$ is even and $\beta=\sqrt{2}$, if $n$ is odd.
\end{theorem}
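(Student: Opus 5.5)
By Lemma \ref{tt09}, the balanced paths are exactly those whose right-steps come in forced consecutive blocks. For $n=2m$ the conditions $u_1=u_3=\cdots=u_{2m-1}=0$ say that right-steps $2j-1$ and $2j$ are adjacent for every $j\in[m]$. For $n=2m+1$ the conditions $u_2=\cdots=u_{2m}=0$ say that right-steps $2j$ and $2j+1$ are adjacent for $j\in[m]$, while the first right-step stands alone. The Dyck condition forces that first right-step to be the first step of the path. My plan is to encode each balanced path as a lattice path with steps $(2,0)$ (a double right-step, call it $R^2$) and $(0,1)$ (a single up-step $U$), staying weakly below the diagonal. I will then count these paths with the cycle lemma / Raney's lemma.

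For $n=2m$, the path is a word in $m$ letters $R^2$ and $2m$ letters $U$. The constraint $y\le x$ only needs to be checked just before each up-step, and at those moments the $x$-coordinate is even. So the condition is: every prefix has at least as many $U$'s as twice the number of $R^2$'s, read with the Dyck convention of never going above $y=x$. Put differently, assign weight $+2$ to $R^2$ and $-1$ to $U$. Then every prefix sum must be nonnegative, and the total is $0$. These are the generalized (ternary-type) Dyck paths counted by $\frac{1}{2m+1}\binom{3m}{m}$. I would justify this by the standard cycle-lemma argument: append one extra $-1$ step. Among the $\binom{3m+1}{m}$ cyclic arrangements, each class of size $3m+1$ contains exactly one rotation whose proper prefixes are all positive. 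This gives $\frac{1}{3m+1}\binom{3m+1}{m}=\frac{1}{2m+1}\binom{3m}{m}$.

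For $n=2m+1$, strip off the leading single right-step. What remains is a path from $(1,0)$ to $(2m+1,2m+1)$ made of $m$ steps $R^2$ and $2m+1$ steps $U$, with prefix condition $y\le x$. In the weighting above, this is a sequence of $m$ steps $+2$ and $2m+1$ steps $-1$, starting at height $1$, with all prefix heights $\ge 0$ and final height $0$. Appending a final $-1$ turns it into a first-passage sequence from $1$ to $-1$, which is the concatenation of two first-passage excursions. The generalized cycle lemma (Raney with $k=2$) counts these as $\frac{2}{3m+2}\binom{3m+2}{m}$. Simplifying gives $\frac{2}{3m+2}\cdot\frac{3m+2}{2m+2}\binom{3m+1}{m}=\frac{1}{m+1}\binom{3m+1}{m}$, as claimed.

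The main obstacle is pinning down the boundary convention exactly. I need to check that the diagonal constraint reduces precisely to nonnegative prefix sums, with the right shift in the odd case, and that no constraint is lost inside a doubled right-step. The latter is automatic, since right-steps only move away from the diagonal. The check against $n=4$ is that the formula gives $\frac15\binom63=4$, but the paper reports $3$, so I would recheck the weighting. With weights $+2$ and $-1$ the condition $y\le x$ becomes height $\ge 0$, and the count is $\frac{1}{2m+1}\binom{3m}{m}=\frac{1}{5}\binom{6}{2}=3$ for $m=2$. So the count is $\frac{1}{2m+1}\binom{3m}{m}$ with the binomial taken as $\binom{3m}{m}$. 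In the even case the cycle-lemma count should therefore read $\frac{1}{3m+1}\binom{3m+1}{m}$, and I would redo the algebra carefully there. Finally, the asymptotic statement follows from Stirling's formula applied to the ratio with $C_n\sim 4^n/(\sqrt{\pi}n^{3/2})$. Using $\binom{3m}{m}\sim \frac{\sqrt3}{2\sqrt{\pi m}}(27/4)^m$ gives growth $(27/64)^{m}=\alpha^{2m}$ together with the stated constants $\beta$.
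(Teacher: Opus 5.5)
Your proof is correct. The reduction step matches the paper's: both use Lemma~\ref{tt09} to fuse right-steps into double steps (after the forced initial right-step when $n$ is odd). The paper then rescales to lattice paths below $y=2x$ ending at $(m,2m)$, resp.\ below $y=2x+1$ ending at $(m,2m+1)$.

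The difference is in how these paths are counted. The paper quotes the OEIS: A001764 for even $n$, and for odd $n$ it first reverses the path and swaps the step types to get paths below $x=2y$ ending at $(2m+1,m)$, then quotes A006013. You instead encode the height $x-y$ as a walk with steps $+2,-1$ and prove both formulas with the cycle lemma. The case $k=1$ gives $\frac{1}{3m+1}\binom{3m+1}{m}=\frac{1}{2m+1}\binom{3m}{m}$. Reading the odd case as a first passage from height $1$ to $-1$ gives Raney's $k=2$ count $\frac{2}{3m+2}\binom{3m+2}{m}=\frac{1}{m+1}\binom{3m+1}{m}$. Your route is self-contained and explains the odd-case number structurally, as an ordered pair of ternary excursions, with no reversal bijection. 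The paper's route is shorter but outsources the enumeration.

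A few slips to clean up:
\begin{itemize}
\item In words you reversed the prefix condition. The constraint $y\le x$ means every prefix has \emph{at most} twice as many up-steps as double right-steps. Your weighted formulation (prefix sums $\ge 0$) is the correct one.
\item With weights $+2,-1$ and an appended $-1$, no rotation can have all proper prefix sums positive, since a step $\ge -1$ cannot go from height $\ge 1$ to $-1$. The distinguished rotation is the unique one whose proper prefix sums are all \emph{nonnegative}.
\item The $\binom{3m+1}{m}$ objects are linear words, falling into rotation classes of full size $3m+1$. There is no periodicity because the total is $-1$.
\item The apparent discrepancy at $n=4$ came only from evaluating $\binom{6}{3}$ instead of $\binom{3m}{m}=\binom{6}{2}$. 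Your derivation was already right, so that paragraph should simply be deleted.
\item You assert the constants $\beta$ rather than compute them. Stirling gives the ratios $\frac{\sqrt6}{2}(27/64)^m=\sqrt{3/2}\,\alpha^{2m}$ and $\frac{3\sqrt6}{8}(27/64)^m=\sqrt2\,\alpha^{2m+1}$, which confirms the statement. The paper's proof does not address the asymptotics at all.
\end{itemize}
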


\begin{proof}
Assume that $n=2m$.
By Lemma \ref{tt09} we have $u_1=u_3=\cdots=u_{n-1}=0$. Thus, the right-steps come in consecutive pairs (see Figure \ref{fig5} (a)).
Collapse each pair of right-steps into a single double right-step. We obtain a path from $(0,0)$ to $(2m,2m)$ consisting of $m$ double right-steps $(2,0)$ and $2m$ up-steps $(0,1)$, that never rises above the line $y=x$. Rescale the $x$-axis by a factor of $2$ to obtain a path from $(0,0)$ to $(m,2m)$ with steps $(1,0)$ and $(0,1)$ that never rises above the line $y=2x$. It is well known (e.g., \seqnum{A001764} in \cite{Sl}) that the number of such paths is given by
$\frac{1}{2m+1}\binom{3m}{m}$.

Assume now that $n=2m+1$.
By Lemma \ref{tt09} we have $u_2=u_4=\cdots=u_{n-1}=0$. Thus, except for the first right-step, all right-steps come in consecutive pairs (see Figure \ref{fig5} (b)).
Thus, after the first right-step has been made, we may collapse the remaining pairs of right-steps into double right-steps. This gives us a path that may be regarded as a path from $(0,0)$ to $(2m, 2m+1)$ consisting of $m$ double right-steps $(2,0)$ and $2m+1$ up-steps $(0,1)$ that never rises above the line $y=x+1$. Rescaling the $x$-axis by a factor of $2$ yields a path from $(0,0)$ to $(m,2m+1)$ with steps $(1,0)$ and $(0,1)$ that never rises above the line $y=2x+1$. We claim that the number of such paths is equal to the number of paths from $(0,0)$ to $(2m+1,m)$ with steps $(1,0)$ and $(0,1)$ that never rise above the line $x=2y$. Indeed, reversing the path and interchanging $(1,0)$ and $(0,1)$ is a bijection between the two path families (see Figure \ref{fifi55}). It is well known (e.g., \seqnum{A006013} in \cite{Sl}) that the number of such paths is given by
$\frac{1}{m+1}\binom{3m+1}{m}$.
\end{proof}

\begin{figure}[H]
\centering
\renewcommand{\thesubfigure}{\alph{subfigure}}
\newcommand{\UpStack}[2]{%
  \draw[->] (#1,0.00) -- ++(0,0.90);
  \node at (#1,1.10) {$\vdots$};
  \draw[->] (#1,1.30) -- ++(0,0.90);
  \node[above] at (#1,2.22) {#2};
}
\subfloat[Even $n$.]{
\scalebox{0.7}{%
\begin{tikzpicture}[thick]
\draw[->] (0.00,0) -- (0.90,0);   
\draw[->] (1.06,0) -- (1.96,0);   
\draw[->] (2.12,0) -- (3.02,0);   
\draw[->] (3.18,0) -- (4.08,0);   
\draw[->] (4.24,0) -- (5.14,0);   
\draw[->] (5.30,0) -- (6.20,0);   
\UpStack{2.04}{$u_2$}  %
\UpStack{4.16}{$u_4$}  %
\node at (6.90,0) {$\cdots$};
\draw[->] (7.60,0) -- (8.50,0);   
\draw[->] (8.66,0) -- (9.56,0);   
\UpStack{9.64}{$u_n$}
\end{tikzpicture}}%
\label{fig:u-even}}
\\[0.6em]  
\subfloat[Odd $n$.]{
\scalebox{0.7}{%
\begin{tikzpicture}[thick]
\draw[->] (0.00,0) -- (0.90,0);   
\draw[->] (1.06,0) -- (1.96,0);   
\draw[->] (2.12,0) -- (3.02,0);   
\draw[->] (3.18,0) -- (4.08,0);   
\draw[->] (4.24,0) -- (5.14,0);   
\UpStack{0.98}{$u_1$}  %
\UpStack{3.10}{$u_3$}  %
\node at (5.84,0) {$\cdots$};
\draw[->] (6.54,0) -- (7.44,0);   
\draw[->] (7.60,0) -- (8.50,0);  
\UpStack{8.58}{$u_n$}  %
\end{tikzpicture}}%
\label{fig:u-odd}}
\caption{The distribution of the right-steps in black-white balanced Dyck paths, for even and odd $n$.}
\label{fig5}
\end{figure}

\begin{figure}[H]
    \centering
    \begin{subfigure}[t]{0.5\textwidth}
        \centering
        \captionsetup{width=5.9cm}
        \scalebox{0.8}{%
\begin{tikzpicture}[scale=0.25, thick]
  \useasboundingbox (0,0) rectangle (8,15);
  \draw[very thin] (0,0) grid (8,15);
  \draw[->] (0,0) -- (0,16) node[above] {$y$};
  \draw[->] (0,0) -- (9,0) node[right] {$x$};
  \draw[dashed,domain=0:7] plot (\x,{2*\x+1});
  \draw[line width=2pt]
    (0,0)--(0,1)--(1,1)--(1,2)--(1,3)--(2,3)
    --(2,4)--(3,4)--(3,5)--(3,6)--(4,6)
    --(4,7)--(5,7)--(5,8)--(5,9)--(6,9)
    --(6,10)--(6,11)--(6,12)--(6,13);
\end{tikzpicture}}%
        \caption{A path from $(0,0)$ to $(6,13)$ with steps $(1,0)$ and $(0,1)$ that never rises above the line $y=2x+1$.}
    \end{subfigure}%
    ~ 
    \begin{subfigure}[t]{0.5\textwidth}
        \centering
        \captionsetup{width=5.5cm}
        \scalebox{0.8}{%
\begin{tikzpicture}[scale=0.25, thick]
  \useasboundingbox (0,0) rectangle (15,8);
  \draw[very thin] (0,0) grid (15,8);
  \draw[->] (0,0) -- (0,9) node[above] {$y$};
  \draw[->] (0,0) -- (16,0) node[right] {$x$};
  \draw[dashed,domain=0:7.5] plot ({2*\x},\x);
  \draw[line width=2pt]
    (0,0)--(1,0)--(2,0)--(3,0)--(4,0)--(4,1)
    --(5,1)--(6,1)--(6,2)--(7,2)--(7,3)
    --(8,3)--(9,3)--(9,4)--(10,4)--(10,5)
    --(11,5)--(12,5)--(12,6)--(13,6);
\end{tikzpicture}}%
        \caption{The corresponding path from $(0,0)$ to $(13,6)$ with steps $(1,0)$ and $(0,1)$ that never rises above the line $x=2y$.}
    \end{subfigure}
    \caption{An illustration of the bijection described in the proof of Theorem \ref{tt94} in the odd $n$ case.}\label{fifi55}
\end{figure}

\begin{figure}[H]
\centering
\scalebox{0.3}{%
\begin{tikzpicture}
\tikzset{DyckPath/.style={black,line width=4pt,line cap=round}}
\newcommand{\ColsFour}{%
  \foreach \i in {0,...,3}{
    \ifnum\i=0 \foreach \j in {0,...,3}{\fill[black!20] (\i,\j) rectangle ++(1,1);} \fi
    \ifnum\i=2 \foreach \j in {0,...,3}{\fill[black!20] (\i,\j) rectangle ++(1,1);} \fi}
  \draw[very thin] (0,0) grid (4,4);}
\newcommand{\Yes}{\node[font=\sffamily\Huge] at (2,-0.7) {yes};}
\newcommand{\No}{\node[font=\sffamily\Huge]  at (2,-0.7) {no};}

\begin{scope}[shift={(0.0,10.4)}]
  \ColsFour
  \draw[DyckPath] (0,0)--(1,0)--(2,0)--(3,0)--(4,0)--(4,1)--(4,2)--(4,3)--(4,4);
  \Yes
\end{scope}
\begin{scope}[shift={(5.2,10.4)}]
  \ColsFour
  \draw[DyckPath] (0,0)--(1,0)--(2,0)--(3,0)--(3,1)--(4,1)--(4,2)--(4,3)--(4,4);
  \No
\end{scope}
\begin{scope}[shift={(10.4,10.4)}]
  \ColsFour
  \draw[DyckPath] (0,0)--(1,0)--(2,0)--(3,0)--(3,1)--(3,2)--(4,2)--(4,3)--(4,4);
  \No
\end{scope}
\begin{scope}[shift={(15.6,10.4)}]
  \ColsFour
  \draw[DyckPath] (0,0)--(1,0)--(2,0)--(3,0)--(3,1)--(3,2)--(3,3)--(4,3)--(4,4);
  \No
\end{scope}
\begin{scope}[shift={(20.8,10.4)}]
  \ColsFour
  \draw[DyckPath] (0,0)--(1,0)--(2,0)--(2,1)--(3,1)--(4,1)--(4,2)--(4,3)--(4,4);
  \Yes
\end{scope}

\begin{scope}[shift={(0.0,5.2)}]
  \ColsFour
  \draw[DyckPath] (0,0)--(1,0)--(2,0)--(2,1)--(3,1)--(3,2)--(4,2)--(4,3)--(4,4);
  \No
\end{scope}
\begin{scope}[shift={(5.2,5.2)}]
  \ColsFour
  \draw[DyckPath] (0,0)--(1,0)--(2,0)--(2,1)--(2,2)--(3,2)--(4,2)--(4,3)--(4,4);
  \Yes
\end{scope}
\begin{scope}[shift={(10.4,5.2)}]
  \ColsFour
  \draw[DyckPath] (0,0)--(1,0)--(2,0)--(2,1)--(2,2)--(3,2)--(3,3)--(4,3)--(4,4);
  \No
\end{scope}
\begin{scope}[shift={(15.6,5.2)}]
  \ColsFour
  \draw[DyckPath] (0,0)--(1,0)--(2,0)--(2,1)--(3,1)--(3,2)--(3,3)--(4,3)--(4,4);
  \No
\end{scope}
\begin{scope}[shift={(20.8,5.2)}]
  \ColsFour
  \draw[DyckPath] (0,0)--(1,0)--(2,0)--(2,1)--(3,1)--(3,2)--(4,2)--(4,3)--(4,4);
  \No
\end{scope}

\begin{scope}[shift={(0.0,0.0)}]
  \ColsFour
  \draw[DyckPath] (0,0)--(1,0)--(1,1)--(2,1)--(2,2)--(3,2)--(4,2)--(4,3)--(4,4);
  \No
\end{scope}
\begin{scope}[shift={(5.2,0.0)}]
  \ColsFour
  \draw[DyckPath] (0,0)--(1,0)--(1,1)--(2,1)--(3,1)--(3,2)--(3,3)--(4,3)--(4,4);
  \No
\end{scope}
\begin{scope}[shift={(10.4,0.0)}]
  \ColsFour
  \draw[DyckPath] (0,0)--(1,0)--(1,1)--(2,1)--(3,1)--(3,2)--(4,2)--(4,3)--(4,4);
  \No
\end{scope}
\begin{scope}[shift={(15.6,0.0)}]
  \ColsFour
  \draw[DyckPath] (0,0)--(1,0)--(1,1)--(2,1)--(2,2)--(3,2)--(3,3)--(4,3)--(4,4);
  \No
\end{scope}

\end{tikzpicture}}
\caption{The $14$ Dyck paths of semilength $4$ with column-alternating coloring. The label under each Dyck path indicates whether it is black-white balanced. Exactly $3=\frac{1}{2\cdot2+1}\binom{3\cdot2}{2}$ Dyck paths are labeled with `yes', in agreement with Theorem \ref{tt94}.}
\end{figure}

\end{document}